\newcommand{\Cc}{\mathbb{C}}
\newcommand{\conv}{\mathtt{convergence}}
\newcommand{\Hh}{\underline{H}}
\newcommand{\Hht}{\underline{\widetilde{H}}}
\newcommand{\Ht}{\widetilde{H}}
\newcommand{\Kk}{\mathcal{K}}
\newcommand{\Pe}{\textit{P\!e}}
\renewcommand{\Re}{\mathrm{Re}}
\newcommand{\Rr}{\mathbb{R}}
\newcommand{\Rrnn}{\mathbb{R}^{n\times n}}
\newcommand{\tol}{\mathtt{tol}}
\newtheorem{lemma}{Lemma}
\begin{document}
\begin{frontmatter}

\title{ART: adaptive residual--time restarting for\\
       Krylov subspace matrix exponential evaluations}

\author[kiam]{M.A.~Botchev\corref{cor1}}
\ead{botchev@ya.ru}

\author[cge]{L.A.~Knizhnerman}
\ead{lknizhnerman@gmail.com}

\cortext[cor1]{Corresponding author}

\address[kiam]{Keldysh Institute of Applied Mathematics,
Russian Academy of Sciences, Miusskaya~Sq.~4, 125047 Moscow,
Russia}

\address[cge]{Mathematical Modelling Department, Central Geophysical Expedition, 
123298 Moscow, Russia}

\begin{abstract}
  In this paper a new restarting method for Krylov subspace matrix
  exponential evaluations is proposed.  Since our restarting technique
  essentially employs the residual, some convergence results for the residual
  are given.  We also discuss how the restart length can be adjusted after each
  restart cycle,  which leads to an adaptive restarting procedure.
  Numerical tests are
  presented to compare our restarting with three other restarting methods.
Some of the algorithms described in this paper are a part of the 
Octave/Matlab package expmARPACK available at
\url{http://team.kiam.ru/botchev/expm/}.
\end{abstract}

\begin{keyword}
Krylov subspace methods\sep 
exponential time integration\sep
Arnoldi method\sep
Krylov subspace restarting\sep
shift-and-invert Krylov subspace methods

\MSC[2008]
65F60 \sep 
65F30 \sep 
65N22 \sep 
65L05 \sep 
35Q61      
\end{keyword}
\end{frontmatter}

\section{Introduction}
Computing actions of the matrix exponential for large usually sparse matrices is an important task  occurring e.g.\ in time integration of ODE (ordinary differential equation) systems, network analysis, Markov chain models and many other problems. Krylov subspace methods appear to be an indispensable tool to carry out this task, especially for nonsymmetric or stiff matrices. Other methods applicable to this task include
\added{polynomial interpolation
methods~\cite{MoretNovati2001,TalEzer1989a,DeRaedt03},
stabilized explicit time stepping methods~\cite{RKC97,Lebedev98,MRAIpap}
and scaled truncated Taylor series approximations~\cite{AlmohyHigham2011}.}

To be computationally efficient, Krylov subspace methods for evaluating actions of the matrix exponential on given vectors often have to rely on so-called restarting techniques~\cite{EiermannErnst06}.
Restarting techniques allow to decrease
memory and computational expenses of the Krylov subspace methods by storing (and working with) only a limited, restricted number of the Krylov subspace basis vectors, in contrast to the nonrestarted methods whose memory and work costs usually
grow with the iteration number.
We emphasize that for general matrix function evaluations no short-term recurrence Krylov subspace
methods exist (such as the conjugate gradient method for solving linear systems). On the other hand,
the design of Krylov subspace restarting techniques for general matrix functions is a more
complicated task than for solving linear systems~\cite{TalEzer2007}.

\added{Different restarting techniques for the Krylov subspace methods to evaluate matrix
  exponential actions have been developed.  In the EXPOKIT package~\cite{EXPOKIT}, the restarting
  is based
  on the division of the time interval $[0,t]$ into smaller intervals to facilitate Krylov subspace
  convergence. Here, $t>0$ is the time at which the solution
  $y(t)=\exp(-tA)v$ to the associated initial value problem (IVP)~\eqref{ivp} has to be computed.
  Authors of~\cite{CelledoniMoret97} propose restarting which employs a residual concept
  (cf.~\eqref{res}):
  the solution of the next restart is an approximate correction with respect to the residual.
  This residual restarting is further developed and tested in~\cite{BGH13}.
  In~\cite[Chapter~3]{PhD_Niehoff} the proposed restarting is based on the
  so-called generalized residual~\cite{HochLubSel97}
  and on the analogy between Krylov subspace methods for solving linear systems and for evaluating
  matrix exponential actions.
  Another restarting is proposed in~\cite{TalEzer2007}: it uses the observation that the Krylov subspace
  methods for evaluating $f(A)v$, with $f(\cdot)$ being a given matrix function and $v$ a given vector,
  can be seen as polynomial methods which interpolate $f$ at the so-called
  Ritz values~\cite{Saad92}.
  Other restarting techniques
  include~\cite{Afanasjew_ea08,PhD_Guettel,Eiermann_ea2011,GuettelFrommerSchweitzer2014,JaweckiAuzingerKoch2018}.}

In this paper we present a new restarting procedure for Krylov subspace methods which is applicable to the matrix exponential.
Our restarting technique has an attractive property, namely, convergence of the restarted Krylov subspace method to the sought after solution $e^{-tA} v$ is guaranteed (though can be slow) for \emph{any} length of the restart.
\added{Since our restarting procedure relies on the behavior of the residual (cf.~\eqref{res}),
we provide some convergence estimates for the residual norm.}
\replaced{We also show how }{Another attractive property of }the restarting we present here
\deleted{is that it }can be extended to an adaptive procedure to choose a good 
length for the next restart. 
Numerical tests are presented showing the potential of our adaptive restarting.
We also discuss how this restarting can be combined with the shift-and-invert (SAI)
rational Krylov subspace approximations.
\added{Another contribution of our paper is that the new restarting is tested
  numerically together with three other restarting methods: the time step restarting
  of EXPOKIT~\cite{EXPOKIT}, the Niehoff--Hochbruck restarting~\cite{PhD_Niehoff} and
  the residual restarting~\cite{CelledoniMoret97,BGH13}.}


The paper is organized as follows.  In Section~\ref{sect:rst}
some preliminaries on the Krylov subspace approximations to
the matrix exponential and their residuals are given,
as well as some results on residual convergence.
Furthermore, in this section the new restarting procedure is discussed and an adaptive
way to choose an optimal Krylov dimension based on this
restarting procedure is introduced.  In this section we also discuss how
the restarting can be generalized to the shift-and-invert (SAI)
Krylov subspace approximations.
In Section~\ref{sect:num} we describe numerical experiments
and present their results.  Finally, the conclusions are drawn in
Section~\ref{sect:fin}.

\section{Restarting procedure}
\label{sect:rst}
\subsection{Preliminaries}
Throughout the paper, unless indicated otherwise,
$\|\cdot\|$ denotes the 2-norm.
We also assume that $A\in\Rrnn$ is a matrix such that 
\begin{equation}
\label{fov}
\Re (x^*Ax)\geqslant 0, \qquad \forall x\in\Cc^n,
\end{equation}
where $\Re(z)$ denotes the real part of a complex number $z$.
This means that initial value problem
\begin{equation}
\label{ivp}
y'(t)=-Ay(t), \qquad y(0)=v,
\end{equation}
with $v\in\Rr^n$ given, is well posed, see
e.g.~\cite{HundsdorferVerwer:book}.
Moreover, \eqref{fov} implies that
\begin{equation}
\label{nexpA}
\|\exp(-tA)\| \leqslant e^{-t\omega}, \qquad t\geqslant 0, 
\end{equation}
where $-\omega=\mu(-A)\leqslant 0$ is the logarithmic 2-norm of the
matrix $-A$~\cite{Dekker-Verwer:1984,HundsdorferVerwer:book}.
Let $\Kk_k(A,v)$ be the Krylov subspace 
$$
\Kk_k(A,v) = \mathrm{span}\{ v,Av,\dots, A^{k-1}v \}.
$$
In practice, the Arnoldi process (or, if $A$ is (possibly skew) Hermitian,
the Lanczos process)
is usually exploited to compute an orthonormal basis $v_1$, \dots, $v_k$
of $\Kk_k(A,v)$ (see for
instance~\cite{GolVanL,Parlett:book,Henk:book,SaadBook2003}):
\begin{equation}
\label{AV=VH}
\begin{gathered}
V_k = \begin{bmatrix} v_1 & \dots & v_k \end{bmatrix}\in\Rr^{n\times k},
\qquad \mathrm{colspan} (V_k) = \Kk_k(A,v),
\\
AV_k=V_{k+1}\Hh_k,
\end{gathered}
\end{equation}
where $\Hh_k\in\Rr^{(k+1)\times k}$ is an upper Hessenberg matrix.
The relation $AV_k=V_{k+1}\Hh_k$ is called the Arnoldi decomposition.
Using the fact that the last row of $\Hh_k$ contains a single
nonzero entry $h_{k+1,k}\geqslant 0$, one can rewrite~\eqref{AV=VH} as
\begin{equation}
\label{Arn}
AV_k=V_{k+1}\Hh_k = V_kH_k + h_{k+1,k}v_{k+1}e_k^T,  
\end{equation}
where $H_k=V_k^TAV_k$ is the matrix $\Hh_k$ with the last
row skipped and $e_k=(0,\dots,0,1)^T\in\Rr^k$.
A Krylov subspace approximation $y_k(t)$ to
$y(t)=\exp(-tA)v$ can be computed
as~\cite{Henk:f(A),DruskinKnizh89,Knizh91,Saad92,HochLub97}
\begin{equation}
\label{yk}
y_k(t) = V_k\exp(-t H_k)(\beta e_1),  
\end{equation}
where $e_1=(1,0,\dots,0)^T\in\Rr^k$ and $\beta=\|v\|$.


A convenient way to monitor the error of the Krylov subspace approximation 
$y_k(t)$ is to check its residual with respect to the ODE in~\eqref{ivp}
\begin{equation}
\label{res}
r_k(t) = -y_k'(t) - Ay_k(t),
\end{equation}
which is readily computable
as~\cite{CelledoniMoret97,DruskinGreenbaumKnizhnerman98,BGH13}
\begin{equation}
\label{rk}
r_k(t) = \beta_k(t) v_{k+1},
\qquad \beta_k(t)=-h_{k+1,k}e_k^T  \exp(-t H_k)(\beta e_1).
\end{equation}
Although this residual notion for the matrix exponential is
known and used in the literature (for example, to obtain upper bounds on the error,
see~\cite[formula~(32)]{DruskinGreenbaumKnizhnerman98} and \cite[Lemma~4.1]{BGH13}),
there are hardly any convergence estimates for $r_k(t)$ available.  
Therefore, since we essentially use this exponential residual in this
paper, we now first give a general a priori convergence
result for its norm.

\def\square{\hfill\hbox{\vrule\vbox{\hrule\phantom{o}\hrule}\vrule}}
\def\be#1\ee{\begin{equation}#1\end{equation}}
\newcommand{\bea}{\begin{eqnarray}}
\newcommand{\eea}{\end{eqnarray}}
\newcommand{\beas}{\begin{eqnarray*}}
\newcommand{\eeas}{\end{eqnarray*}}
\newcommand{\Int}{\int\limits}
\newcommand{\tJ}{\tilde{J}}
\newcommand{\diag}{\mathop{\rm diag}\nolimits}
\newcommand{\sn}{\mathop{\rm sn}\nolimits}
\newtheorem{theorem}{Theorem}
\newtheorem{remark}{Remark}
\newcommand{\eps}{\varepsilon}
\newcommand{\bars}{\bar{s}}
\newcommand{\barH}{\bar{H}}
\newcommand{\RP}{\mathop{\rm \texttt{\textup{\large \bf RP}}}\nolimits}
\newcommand{\AP}{\mathop{\rm \texttt{\textup{\large \bf AP}}}\nolimits}
\newcommand{\bfN}{\mathbf{N}}
\newcommand{\bfR}{\mathbf{R}}
\newcommand{\bfC}{\mathbf{C}}
\newcommand{\calD}{\mathcal{D}}
\newcommand{\calDc}{\stackrel{3}{\mathcal{D}}}
\newcommand{\calDD}{\stackrel{2}{\mathcal{D}}}
\newcommand{\calDm}{\stackrel{m}{\mathcal{D}}}
\newcommand{\calDtri}{\stackrel{3}{\mathcal{D}}}
\newcommand{\calDmp}{\stackrel{m+1}{\mathcal{D}}}
\newcommand{\Mu}{{\mbox{\textrm{M}}}}
\newcommand{\hh}{\widehat{h}}
\newcommand{\wt}{\widetilde}
\newcommand{\la}{\lambda}
\newcommand{\sjk}{\sum_{\stackrel{\scriptstyle j=1}{j\neq k}}^n}
\newcommand{\lk}{\lambda_k}
\newcommand{\lj}{\lambda_j}
\newcommand{\ydl}{\frac{y_j}{\lambda_k-\lambda_j}}
\newcommand{\odl}{\frac{1}{\lambda_k-\lambda_j}}
\newcommand{\tT}{{\mbox{\it \tiny T}}}
\newcommand{\dsp}{\displaystyle}
\newcommand{\B}{\beta}
\newcommand{\pneq}{{\stackrel{\scriptstyle p=1}{p\neq q}}}
\newcommand{\pner}{{\stackrel{\scriptstyle p=1}{p\neq r}}}
\newcommand{\De}{\Delta}
\newcommand{\Th}{\theta}
\newcommand{\A}{\alpha}
\newcommand{\T}{{\mathcal{T}}}
\newcommand{\s}{{\bf s}}
\newcommand{\Sp}{\mathop{\rm Sp}\nolimits}
\newcommand{\Co}{\mathop{\rm Co}\nolimits}
\newtheorem{proposition}{Proposition}
\newtheorem{remarkR}{Remark}
\newcommand{\dist}{\mathop{\rm dist}\nolimits}
\newtheorem{conjecture}{Conjecture}
\newcommand{\calF}{{\mathcal{F}}}
\newcommand{\spanl}{\mathop{\rm span}\nolimits}
\newcommand{\dn}{\mathop{\rm dn}\nolimits}
\newcommand{\bfCext}{\overline{\mathbf{C}}}
\newcommand{\calK}{\mathcal{K}}
\newcommand{\La}{\left\langle}
\newcommand{\Ra}{\right\rangle}
\renewcommand{\Re}{\mathop{\rm Re}\nolimits}


\subsection{An estimate on the residual in terms of the Faber series}

Faber series as a means to investigate convergence of Arnoldi method are introduced in \cite{Knizh91}; see also \cite{BR09}.
Let $\Phi_j$ be the Faber polynomials~\cite{Suetin} on the compact $W(A)$, and
consider the Faber series decomposition
\be \label{app3}
f(z;t) = e^{-tz} = \sum_{j=0}^{+\infty}f_j(t)\Phi_j(z), 
\qquad z\in W(A), \quad t\geqslant 0,
\ee
where $t$ is considered as a parameter.

\begin{proposition}
The residual of the Arnoldi method $r_k(t)$ defined by~\eqref{res}
satisfies the inequality
\be \label{appr2}
\|r_k(t)\|\leqslant 2h_{k+1,k}\sum_{j=k-1}^{+\infty}|f_j(t)|
\leqslant 2\|A\|\sum_{j=k-1}^{+\infty}|f_j(t)|.
\ee
\end{proposition}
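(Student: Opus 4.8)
The plan is to exploit the closed form of the residual in~\eqref{rk}, namely $r_k(t)=\beta_k(t)v_{k+1}$ with $\beta_k(t)=-h_{k+1,k}e_k^T\exp(-tH_k)(\beta e_1)$, and then to insert the Faber series~\eqref{app3} in place of the exponential. Since $\|v_{k+1}\|=1$, we have $\|r_k(t)\|=|\beta_k(t)|=h_{k+1,k}\,|e_k^T\exp(-tH_k)(\beta e_1)|$. Writing $\exp(-tH_k)=f(H_k;t)=\sum_{j=0}^{+\infty}f_j(t)\Phi_j(H_k)$ (the series converges on $W(H_k)\subseteq W(A)$, so this substitution is legitimate), linearity gives
\be
\beta_k(t) = -h_{k+1,k}\sum_{j=0}^{+\infty} f_j(t)\, e_k^T\Phi_j(H_k)(\beta e_1).
\ee
The key structural observation is that $e_k^T\Phi_j(H_k)e_1 = 0$ for $j\leqslant k-2$: since $H_k$ is upper Hessenberg of size $k$, the entry $(H_k^i)_{k,1}$ vanishes for $i\leqslant k-2$, and $\Phi_j$ is a polynomial of exact degree $j$, so $\Phi_j(H_k)$ has zero $(k,1)$-entry whenever $j\leqslant k-2$. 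Hence only the tail $j\geqslant k-1$ survives, and
\be
\|r_k(t)\| = h_{k+1,k}\,\beta\,\Bigl|\sum_{j=k-1}^{+\infty} f_j(t)\, (\Phi_j(H_k))_{k,1}\Bigr|
\leqslant h_{k+1,k}\sum_{j=k-1}^{+\infty} |f_j(t)|\,\|\Phi_j(H_k)\|,
\ee
using $\beta|e_1|=\beta=\|v\|$ — wait, one should be careful here: $\beta e_1$ has norm $\beta=\|v\|$, but the residual bound~\eqref{appr2} as stated has no $\|v\|$ factor, so I will normalize and assume $\|v\|=1$ (i.e. $\beta=1$), consistent with the standing convention, or otherwise carry $\beta$ through; I would state this normalization explicitly.

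It remains to bound $\|\Phi_j(H_k)\|$. Here I would invoke the standard Faber-polynomial estimate on the field of values: by a theorem of Beckermann (or the earlier Crouzeix-type / Knizhnerman bounds cited as \cite{Knizh91,BR09}), for any matrix $B$ one has $\|\Phi_j(B)\|\leqslant 2$ whenever $W(B)$ is contained in the compact set on which the Faber polynomials $\Phi_j$ are defined. Since $W(H_k)=W(V_k^TAV_k)\subseteq W(A)$, this gives $\|\Phi_j(H_k)\|\leqslant 2$ for all $j$, whence
\be
\|r_k(t)\|\leqslant 2h_{k+1,k}\sum_{j=k-1}^{+\infty}|f_j(t)|,
\ee
which is the first inequality in~\eqref{appr2}. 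The second inequality then follows immediately from $h_{k+1,k}\leqslant\|\Hh_k\|=\|AV_k\|\leqslant\|A\|$ (the columns of $V_k$ are orthonormal, and $h_{k+1,k}$ is a single entry of $\Hh_k=V_{k+1}^TAV_k$, hence bounded by $\|A\|$).

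The main obstacle is the uniform bound $\|\Phi_j(H_k)\|\leqslant 2$: this is the one nontrivial ingredient and rests on the fact that $W(A)$ (a convex compact containing the spectrum) is an admissible compact for the Faber machinery and on the field-of-values inclusion $W(H_k)\subseteq W(A)$. Everything else — the Hessenberg vanishing pattern killing the low-order terms, the triangle inequality, and $h_{k+1,k}\leqslant\|A\|$ — is routine. I would also remark that convergence of the Faber series~\eqref{app3} (and hence the legitimacy of termwise evaluation at $H_k$) requires $W(A)$ to be a continuum with connected complement, which holds since $W(A)$ is convex; under assumption~\eqref{fov} it lies in the right half-plane, so $e^{-tz}$ is analytic in a neighborhood and the expansion is valid.
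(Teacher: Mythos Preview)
Your proof is correct and follows essentially the same route as the paper: expand $\exp(-tH_k)$ in the Faber series, use the Hessenberg structure of $H_k$ together with $\deg\Phi_j=j$ to kill the terms $j\leqslant k-2$, and then apply Beckermann's bound $\|\Phi_j(H_k)\|\leqslant 2$ on the numerical range $W(H_k)\subseteq W(A)$. The only cosmetic difference is that the paper does not invoke the closed form~\eqref{rk} directly but instead recomputes $r_k$ from the definition~\eqref{res} via the differential identity $zf(z;t)+\partial_t f(z;t)=0$ and the Arnoldi relation~\eqref{Arn}; this leads to the same expression $h_{k+1,k}v_{k+1}e_k^T\sum_{j\geqslant k-1}f_j(t)\Phi_j(H_k)e_1$ that you reach in one line. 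Your observation about the implicit normalization $\beta=\|v\|=1$ is also accurate: the paper silently takes $y_k=V_kf(H_k;t)e_1$ without the factor $\beta$.
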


\begin{proof}
Throughout the proof for simplicity we omit the index $\cdot_k$
in $H_k$ and $V_k$.
The Faber series converges superexponentially in~$j$ and its coefficients
$f_j(t)$ are smooth in $t$~\cite[Chapter~3]{Suetin}.
This enables us to differentiate series (\ref{app3}) in~$t$.

Decomposition (\ref{app3}) induces the decomposition of the approximant
\be \label{app6}
y_k=Vf(H;t)e_1=V\sum_{j=0}^{+\infty} f_j(t)\Phi_j(H)e_1.
\ee

Evidently,
\[
zf(z;t) + \frac{\partial f(z;t)}{\partial t} = 0,
\]
whence
\be \label{app1}
0 = z\sum_{j=0}^{+\infty}f_j(t)\Phi_j(z)+\sum_{j=0}^{+\infty}f_j^\prime(t)\Phi_j(z)
= \sum_{j=0}^{+\infty}[f_j(t)z+f_j^\prime(t)]\Phi_j(z).
\ee
Exploiting (\ref{res}), (\ref{app6}), (\ref{AV=VH}), (\ref{app1}) with $H$ substituted for $z$, and the equality $\deg\Phi_j=j$, derive
\beas
-r_k(t)
=AV\sum_{j=0}^{+\infty}f_j(t)\Phi_j(H)e_1
+V\sum_{j=0}^{+\infty}f_j^\prime(t)\Phi_j(H)e_1 \\
=(VH+h_{k+1,k}q_{k+1}e_k^\tT)\sum_{j=0}^{+\infty}f_j(t)\Phi_j(H)e_1
+V\sum_{j=0}^{+\infty}f_j^\prime(t)\Phi_j(H)e_1 \\
=V\sum_{j=0}^{+\infty}[f_j(t)H+f_j^\prime(t)]\Phi_j(H)e_1
+h_{k+1,k}q_{k+1}e_k^\tT\sum_{j=0}^{+\infty}f_j(t)\Phi_j(H)e_1 \\
=h_{k+1,k}q_{k+1}e_k^\tT\sum_{j=0}^{+\infty}f_j(t)\Phi_j(H)e_1
=h_{k+1,k}q_{k+1}e_k^\tT\sum_{j=k-1}^{+\infty}f_j(t)\Phi_j(H)e_1.
\eeas
The bound $\|\Phi_j(H)\|\leqslant2$ (see \cite[theorem~1]{B05}) now implies (\ref{appr2}).
\end{proof}

\begin{remark}
For rendering $|f_j(t)|$ specific see \cite[section~4]{BR09}.
\end{remark}

\begin{remark}
Comparing of our estimate~(\ref{appr2}) with the estimate \cite[theorem~3.2]{BR09} for the error
\[
\|y(t)-y_k(t)\| \leqslant 4\sum_{j=k}^{+\infty}|f_j(t)|,
\]
we see that these two upper bounds differ mainly in coefficients.
This gives one a hope that the error and the residual behave similarly to each other.
We also note that there exist error bounds in terms of the
residual
(see~\cite[formula~(32)]{DruskinGreenbaumKnizhnerman98} and \cite[Lemma~4.1]{BGH13}).
\end{remark}








We now discuss our restarting procedure.

\subsection{Restarting algorithm}
The restarting procedure we propose here is based on the observation that, even for 
very small values of $k$, the residual $r_k(s) $ is small
in norm on some interval $s\in [0,\delta]$ where $\delta$ is taken
sufficiently small. This is formulated 
more precisely in the following statement
(which is a simple result given here for completeness).
Let us define function $\varphi_1(z)$ as (see e.g.~\cite{HochbruckOstermann2010})
$$
\varphi_1(z)=(e^z-1)/z.
$$

\begin{figure}
  \centering{\texttt{\begin{minipage}{0.85\linewidth}
           \% Given: $t>0$, $\tol>0$, $\Hh_k$ (cf.~\eqref{AV=VH}-\eqref{yk})\\
           \% determine $n_t$ - number of time steps to monitor the residual\\
           $u := e_1$                              \\
           $n_t:= 100$     \% initial value of $n_t$\\
           $r := 2\tol$                                             \\
           while $r>\tol$                          \\\hspace*{2em}
               $E := \exp( -(t/n_t)H_k )$         \\\hspace*{2em}
               $u := Ee_1$                        \\\hspace*{2em}
               $r := -h_{k+1,k} (e_k^T u)$          \\\hspace*{2em}
               if $|r|\leqslant \tol$               \\\hspace*{4em} 
                  $u := e_1$                        \\\hspace*{4em}
                  break the while loop              \\\hspace*{2em}
               end                                  \\\hspace*{2em}
               $n_t:= 2n_t$                         \\
           end                                      \\
           \% compute residual for intermediate time points \\
           \% until it exceeds tolerance                    \\
           for $i=1,\dots,n_t$                       \\\hspace*{2em}
               $u_0 := u$                            \\\hspace*{2em}
               $u := Eu$                             \\\hspace*{2em}
               $r := -h_{k+1,k} (e_k^Tu)$            \\\hspace*{2em}
               if $|r|> \tol$                        \\\hspace*{4em} 
                   $\delta_k := \frac{i-1}{n_t} t$    \\\hspace*{4em}
                   $u := u_0$                        \\\hspace*{4em} 
                   break the for loop                \\\hspace*{2em} 
               end                                   \\ 
           end
  \end{minipage}}}
  \caption{
  This algorithm determines $\delta_k$ such that $\|r_k(s)\|\leqslant\beta\tol$ for
  $s\in[0,\delta_k]$ \added{and $u\in\Rr^k$ such that $y_k(\delta_k)=V_ku$.}
  The initial value of $n_t$ can be changed if desired.}
\label{alg_delta}
\end{figure}

\begin{lemma}
\label{l1}
Let $y_k$ be the Krylov approximation~\eqref{yk} to $y(t)=\exp(-tA)v$
and let $k$ be fixed.
Then for the residual $r_k(t)$ defined by~\eqref{rk} holds
$$
\|r_k(t)\| \leqslant t h_{k+1,k} \|H_k\|\beta\varphi_1(-t\omega_k),
$$
where $-\omega_k=\mu(-H_k)$ is the logarithmic norm of $-H_k$.
Hence, for any $k$ and $\varepsilon>0$  there exists a $\delta_k>0$ such that
$$
\|r_k(s)\| \leqslant \varepsilon, \qquad \forall s \in [0,\delta_k].
$$
\end{lemma}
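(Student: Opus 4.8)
\medskip
\noindent\textbf{Proof plan.}
The plan is to work directly from the closed form~\eqref{rk} and to expose the cancellation at $t=0$ that forces $r_k$ to be small for small $t$. Since $v_{k+1}$ is a unit vector and $\beta=\|v\|$, we have $\|r_k(t)\|=|\beta_k(t)|=h_{k+1,k}\beta\,|e_k^T\exp(-tH_k)e_1|$. Writing $\phi(t):=\exp(-tH_k)e_1$, which solves $\phi'=-H_k\phi$ with $\phi(0)=e_1$, the fundamental theorem of calculus gives
\[
\phi(t)=e_1-H_k\int_0^t\exp(-sH_k)e_1\,ds .
\]
Multiplying on the left by $e_k^T$ and using $e_k^Te_1=0$ (valid for $k\geqslant 2$, the only case of interest) I obtain $e_k^T\exp(-tH_k)e_1=-e_k^TH_k\int_0^t\exp(-sH_k)e_1\,ds$.

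Next I would bound the right-hand side by $\|H_k\|\int_0^t\|\exp(-sH_k)\|\,ds$ and invoke the logarithmic-norm estimate, the exact analogue of~\eqref{nexpA} with $H_k$ in place of $A$: $\|\exp(-sH_k)\|\leqslant e^{-s\omega_k}$ for $s\geqslant 0$, where $-\omega_k=\mu(-H_k)$. An elementary integration then gives $\int_0^t e^{-s\omega_k}\,ds=t\,\varphi_1(-t\omega_k)$, an identity that holds for every real $\omega_k$ (including $\omega_k=0$) because $\varphi_1$ is entire. Collecting the factors produces $\|r_k(t)\|\leqslant t\,h_{k+1,k}\|H_k\|\beta\,\varphi_1(-t\omega_k)$, which is the asserted inequality.

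For the last assertion, fixing $k$ I would note that the right-hand side just obtained is a continuous function of $t$ that vanishes at $t=0$; hence for any $\varepsilon>0$ there is a $\delta_k>0$ on which this bound stays below $\varepsilon$, and with it $\|r_k(s)\|$.

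There is no genuine difficulty here --- the statement is elementary --- but two small points deserve attention. First, the cancellation $e_k^Te_1=0$ is precisely what turns an $O(1)$ estimate near $t=0$ into an $O(t)$ one, and it is the reason the result is stated for (and needs) $k\geqslant 2$. Second, one must recognise the elementary integral $\int_0^t e^{-s\omega_k}\,ds$ as exactly $t\,\varphi_1(-t\omega_k)$, which is what matches the bound to the $\varphi_1$-form in the statement; the logarithmic-norm inequality itself is classical (see the references cited for~\eqref{nexpA}).
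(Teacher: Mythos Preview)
Your proof is correct and follows essentially the same route as the paper's: both use $e_k^Te_1=0$ to replace $e_k^T\exp(-tH_k)e_1$ by $e_k^T(\exp(-tH_k)-I)e_1$, express this difference via $\varphi_1$, and bound it through the logarithmic-norm estimate $\|\exp(-sH_k)\|\leqslant e^{-s\omega_k}$. The only cosmetic difference is that the paper writes $\exp(-tH_k)-I=-tH_k\varphi_1(-tH_k)$ and then invokes the operator bound $\|\varphi_1(-tH_k)\|\leqslant\varphi_1(-t\omega_k)$ from~\cite{HochbruckOstermann2010}, whereas you use the equivalent integral representation $\exp(-tH_k)-I=-H_k\int_0^t\exp(-sH_k)\,ds$ and integrate the exponential bound directly to obtain $t\varphi_1(-t\omega_k)$; your remark that the argument requires $k\geqslant 2$ is a point the paper leaves implicit.
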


\begin{proof}
It follows from~\eqref{rk} that
$$
\|r_k(t)\| = |\beta_k(t)| = h_{k+1,k} \cdot | e_k^T u(t)|, 
$$
where $u(t)=\exp(-t H_k)(\beta e_1)$ solves the initial
value problem
$$
u'(t) = -H_ku(t), \quad u(0)= \beta e_1.
$$
Then we have
$$
u(t)-u(0) = (\exp(-t H_k) - I )u(0) = -tH_k\varphi_1(-tH_k)u(0).
$$
Since $H_k=V_k^TAV_k$ is a Galerkin projection of $A$,
from~\eqref{fov} it follows that
\begin{equation}
\label{Ritz}
\lambda_{\min}(\frac12(A+A^T))\leqslant   \lambda_{\min}(\frac12(H_k+H_k^T)),
\qquad
\lambda_{\max}(\frac12(H_k+H_k^T))\leqslant \lambda_{\max}(\frac12(A+A^T)),
\end{equation}
where $\lambda_{\min}(\cdot)$ and $\lambda_{\max}(\cdot)$
denote the minimal and maximal eigenvalues, respectively. 
We further note that
$$
\lambda_{\min}(\frac12(A+A^T))=\omega,
$$
where $\omega$ is the constant defined in~\eqref{nexpA}
(see e.g.~\cite{Dekker-Verwer:1984,HundsdorferVerwer:book})
and denote $\omega_k=\lambda_{\min}(\frac12(H_k+H_k^T))$.
Hence, $H_k$ inherits the property~\eqref{nexpA} of $A$, so that
\begin{equation}
\label{nexpH}
\|\exp(-tH_k)\| \leqslant e^{-t\omega_k}, \qquad t\geqslant 0, 
\end{equation}
with $-\omega_k=\mu(-H_k)\leqslant-\omega$.
Using the estimate in the proof of Lemma~2.4
in~\cite{HochbruckOstermann2010}, one can see that
\eqref{nexpA} implies
\begin{equation}
\label{nPhi}
\|\varphi_1(-tA)\|\leqslant \varphi_1(-t\omega),
\end{equation}
so that a similar relation holds for $\|\varphi_1(-tH_k)\|$.
Hence,
\begin{equation}
\label{no_shrp}  
\begin{aligned}
|e_k^Tu(t)|=|e_k^T(u(t)-u(0))|& \leqslant\|u(t)-u(0)\|
\\
& \leqslant t\|H_k\|\,\|\varphi_1(-tH_k)\|\,\|u(0)\|
\leqslant t\|H_k\|\beta\varphi_1(-t\omega_k).
\end{aligned}
\end{equation}
Then the last part of the Lemma statement follows from the observation
that $0<\varphi_1(z)\leqslant 1$ for $z\leqslant 0$.
\end{proof}

\begin{figure}
  \centering{\texttt{\begin{minipage}{0.8\linewidth}
\% Given: $A\in\Rrnn$, $v\in\Rr^n$, $t>0$, $k_{\max}$ and $\tol>0$ \\
$\conv := \mathtt{false}$                          \\
while not($\conv$)                                 \\\hspace*{2em}
   $\beta:= \|v\|$, $v_1 := \frac1\beta v$         \\\hspace*{2em}
   for $k=1,\dots,k_{\max}$                         \\\hspace*{4em}
       $w := A v_k$                                \\\hspace*{4em}
       for $i=1,\dots,k$                           \\\hspace*{6em}
           $h_{i,k} := w^T v_i$                     \\\hspace*{6em}
           $w      := w - h_{i,k}v_i$               \\\hspace*{4em}
       end                                         \\\hspace*{4em}
       $h_{k+1,k} := \|w\|$                         \\\hspace*{4em}
       $s  := (t/6) (1, 2, 3, 4, 5, 6)^T\in\Rr^6$                   \\\hspace*{4em}
       for $q=1,\dots,6$                                            \\\hspace*{6em}
           $u   := \exp(-s_qH_k)e_1$                                \\\hspace*{6em}
           $r(s_q) := -h_{k+1,k}(e_k^Tu)$                              \\\hspace*{4em}
       end                                                          \\\hspace*{4em}
       $\mathtt{rnorm} := \max\{r(s_1),\dots,r(s_6)\}$                    \\\hspace*{4em}
       if $\mathtt{rnorm}\leqslant\tol$                             \\\hspace*{6em}
           $\conv := \mathtt{true}$                                 \\\hspace*{6em}
           break                                                    \\\hspace*{4em}
       elseif $k=k_{\max}$                                           \\\hspace*{6em}
           \% -------- restart after $k_{\max}$ steps                \\\hspace*{6em}
           carry out Algorithm in Figure~\ref{alg_delta}:           \\\hspace*{6em}
           find $\delta_k$ and $u=\exp(-\delta_kH_k)e_1$            \\\hspace*{6em}
           $v := V_k(\beta u)$                                     \\\hspace*{6em}
           $t := t - \delta_k$                                     \\\hspace*{6em}
           break                                                   \\\hspace*{4em}
       end                                                         \\\hspace*{4em} 
       $v_{k+1} := \frac1{h_{k+1,k}}w$                              \\\hspace*{2em} 
   end                                                            \\
end                                                               \\
$y_k := V_k(\beta u)$
\end{minipage}}}
  \caption{Description of the RT (residual--time) restarting algorithm.
  The algorithm computes Krylov subspace approximation $y_k(t)\approx\exp(-tA)v$ such that
  for its residual $r_k(t)$ holds $\|r_k(s)\|\leqslant\tol$ for $s=t$ and several
  representative points $s\in (0,t)$.}
\label{f:rst_alg}
\end{figure}

It is not difficult to see that we can not expect the estimate
of Lemma~\ref{l1} to be sharp.  Indeed, the sharpness is lost 
in the estimates~\eqref{no_shrp} and this is confirmed by 
numerical experiments given below in this section (see 
also Figure~\ref{f:est}).
We now describe how our restarting procedure works
and give some numerical illustrations.

Let $\tol$ be the given residual tolerance, i.e., we need
to compute a Krylov subspace approximation $\tilde{y}(t)$ 
to $\exp(-tA)v$ whose residual $\tilde{r}$ satisfies
$\|\tilde{r}(s)\|\leqslant\tol$ for all $s\in [0,t]$.
Furthermore, let $k_{\max}$ be the largest possible Krylov 
dimension, so that the costs for computing $V_{k+1}$, $\underline{H}_k$ and
storing $V_{k+1}$ are unacceptably high for $k>k_{\max}$.
Denote by $\delta_k$ the length  of the 
interval $[0,\delta_k]\ni s$ such that 
$\|r_k(s)\|\leqslant\tol$.
We carry out (at most) $k_{\max}$ steps of the Krylov method, computing on the way
at every step~$k$ the residual norm $\|r_k(s)\|$ for several
values $s\in[0,t]$ (including the value $s=t$, of course).
If at step $k$ the largest of the computed
values $\|r_k(s)\|$ is below the given tolerance~$\tol$
then we stop at this step (in this case no restarting is needed).
Otherwise, if $k_{\max}$ steps are done but the stopping criterion
is not satisfied then we carry out our time restarting procedure.
This means that we first determine the value $\delta_{k_{\max}}$.
Taking into account Lemma~\ref{l1}, this is not a difficult task
which is carried out by tracking the values of $\|r_k(s)\|$
for increasing $s$ (this procedure is outlined in detail in
Figure~\ref{alg_delta}).
Then we set
$v:=y_{k_{\max}}(\delta_{k_{\max}})$ and solve the IVP~\eqref{ivp}
on a shorter time interval $[0,t-\delta_{k_{\max}}]$.
Again, when solving this IVP on the shortened time interval,
we can apply the same restarting procedure.
We call this restarting method RT restarting (residual--time
restarting) to emphasize its essential dependence on the time behavior
of the residual function $r_k(s)$.  The RT restarting algorithm
is outlined in Figure~\ref{f:rst_alg} and 
schematically illustrated in Figure~\ref{f:rst_ill}.

\begin{figure}
\centering{\includegraphics[width=0.8\textwidth]{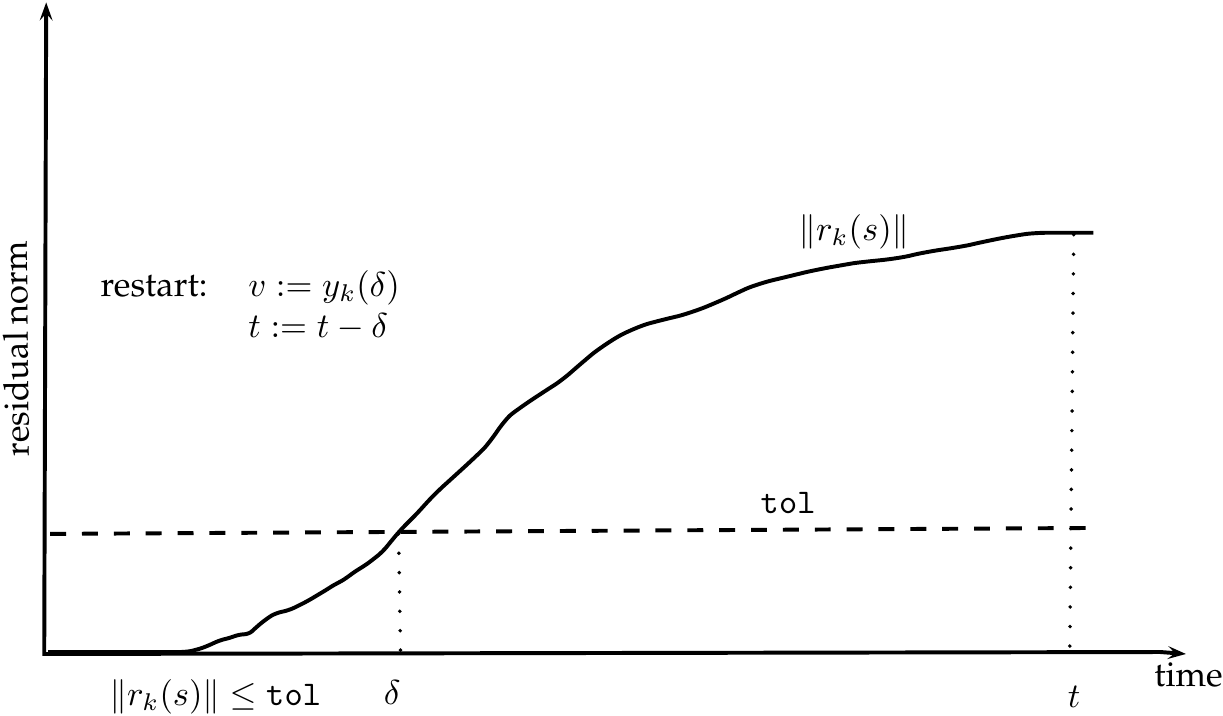}}
\caption{A sketch of the RT restarting procedure}
\label{f:rst_ill}
\end{figure}

It is easy to see that our RT restarting
procedure is guaranteed to converge for \emph{any} restart
length $k_{\max}$ because there always exists a value
$\delta_{k_{\max}}>0$ such that the residual is sufficiently
small in norm on the interval $[0,\delta_{k_{\max}}]$ (see
Lemma~\ref{l1}).

To show how the restarting procedure works, in
Figure~\ref{f:conv_cd} we give convergence plots
of the restarted Arnoldi method applied to the
convection--diffusion test
problem described in Section~\ref{sect:cd}.
The figure also shows the values of $\delta_{k_{\max}}$
plotted against the restart numbers.
In addition, in Figure~\ref{f:rk(s)}
we plot the residual norms $\|r_k(s)\|$ versus
$s\in[0,t]$.  At this point it is instructive
to demonstrate the estimate of Lemma~\ref{l1}.
Therefore, in Figure~\ref{f:est} 
the estimates are plotted which correspond to
the residuals shown in Figure~\ref{f:rk(s)}.
We see that the upper bounds are, as expected,
by no means sharp but they do reflect the time dependence
of the residual norm $\|r_k(s)\|$, $s\in[0,t]$,
qualitatively well.

\begin{figure}
\includegraphics[width=0.48\textwidth]{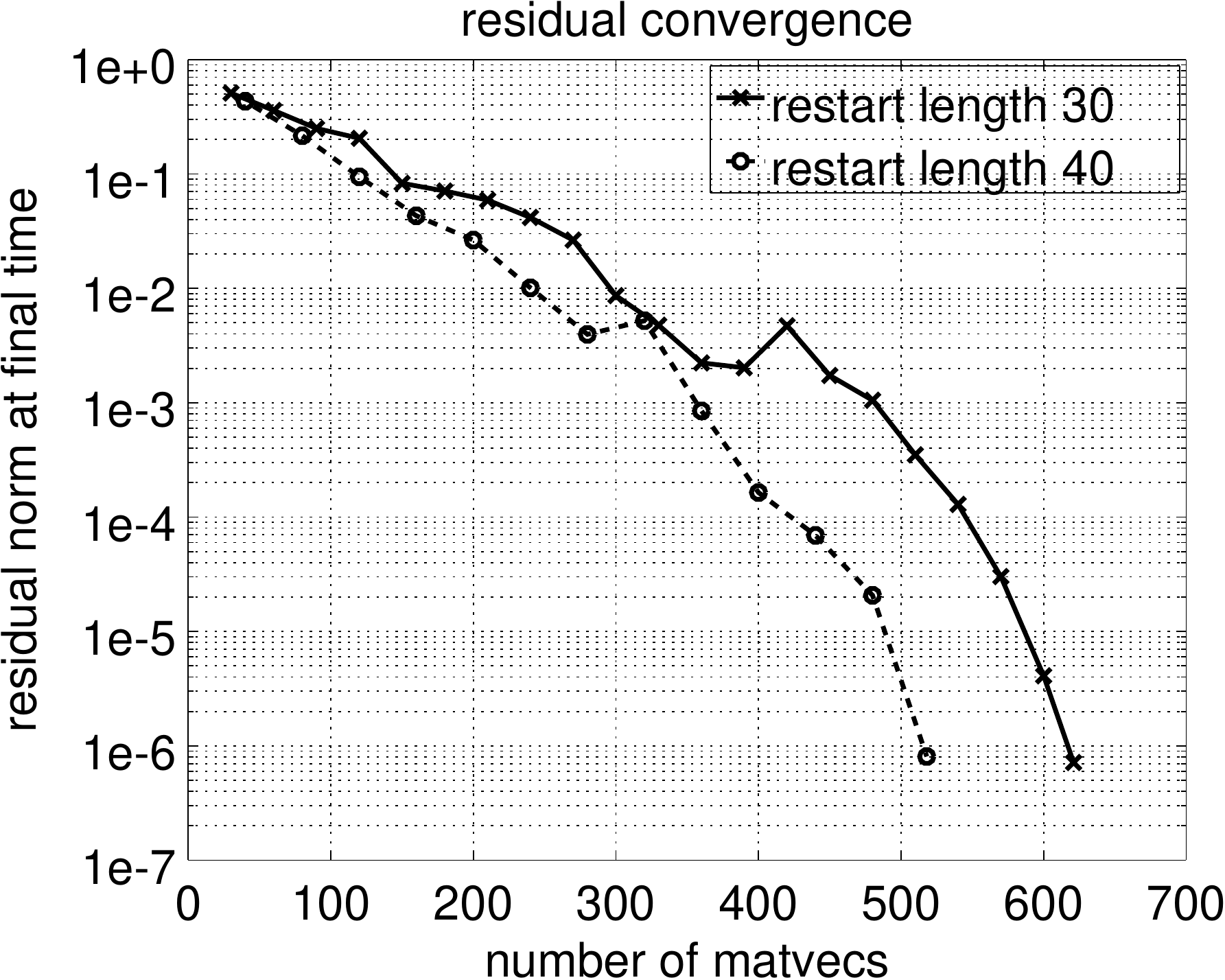}
\includegraphics[width=0.48\textwidth]{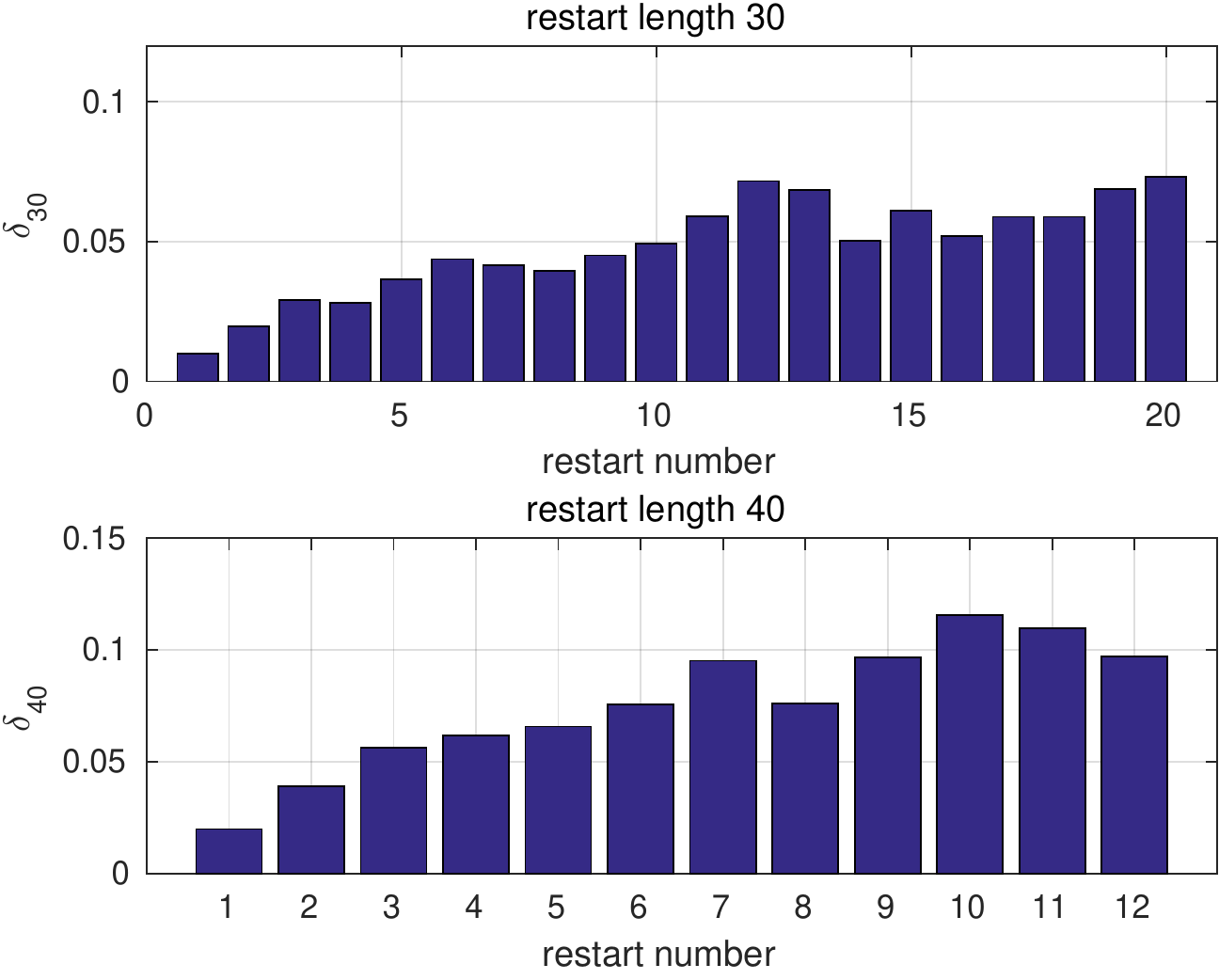}  
\caption{Residual convergence plot (left) and the values of $\delta_{k_{\max}}$
against the restart numbers (right) for the convection--diffusion
test problem (see Section~\ref{sect:cd}).
The matrix $A$ is a discretized convection--diffusion
operator for Peclet number $\Pe=100$, mesh $102\times 102$
($n=100^2$).}
\label{f:conv_cd}  
\end{figure}

\begin{figure}
\includegraphics[width=0.48\textwidth]{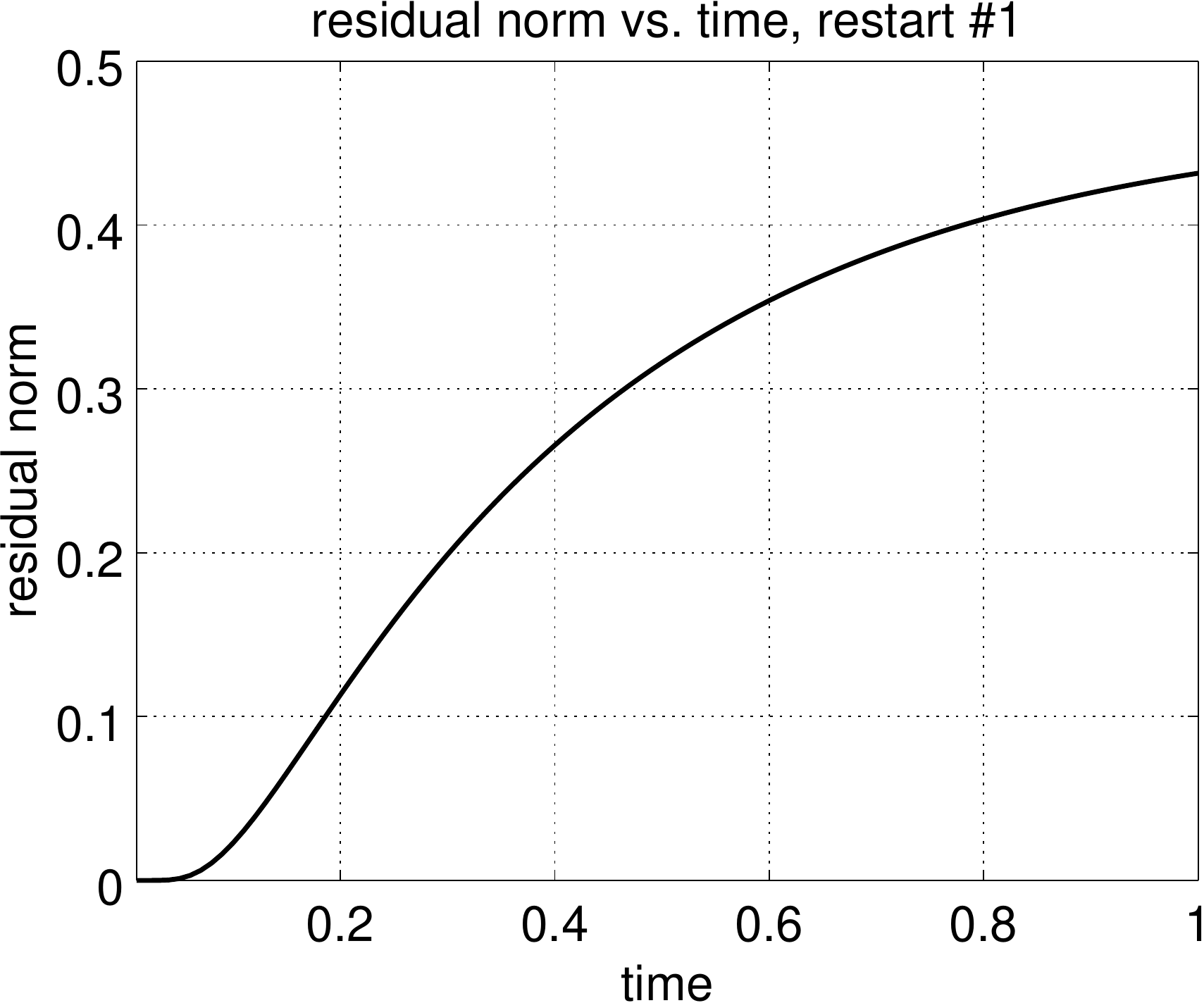}
\includegraphics[width=0.48\textwidth]{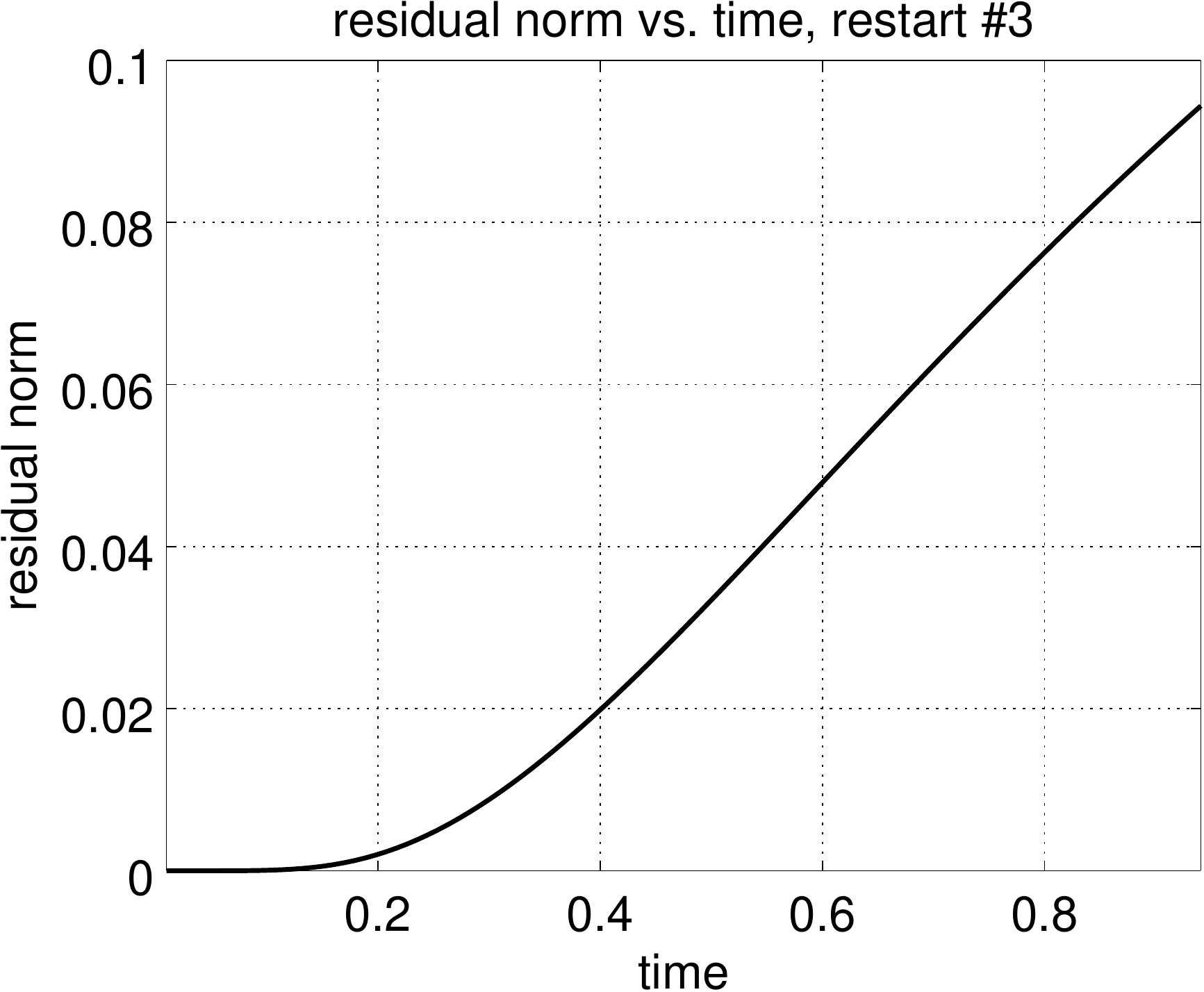}
\caption{Residual norms  $\|r_k(s)\|$ versus $s\in[0,t]$
  at the end of the first (left) and the third (right) restart
  cycles of length~40 for the convection--diffusion
test problem (see Section~\ref{sect:cd}).
The time interval at right is shorter, as it has been
decreased at first two restarts.
The matrix $A$ is a discretized convection--diffusion
operator for Peclet number $\Pe=100$, mesh $102\times 102$
($n=100^2$).}
\label{f:rk(s)}
\end{figure}

\begin{figure}
\includegraphics[width=0.48\textwidth]{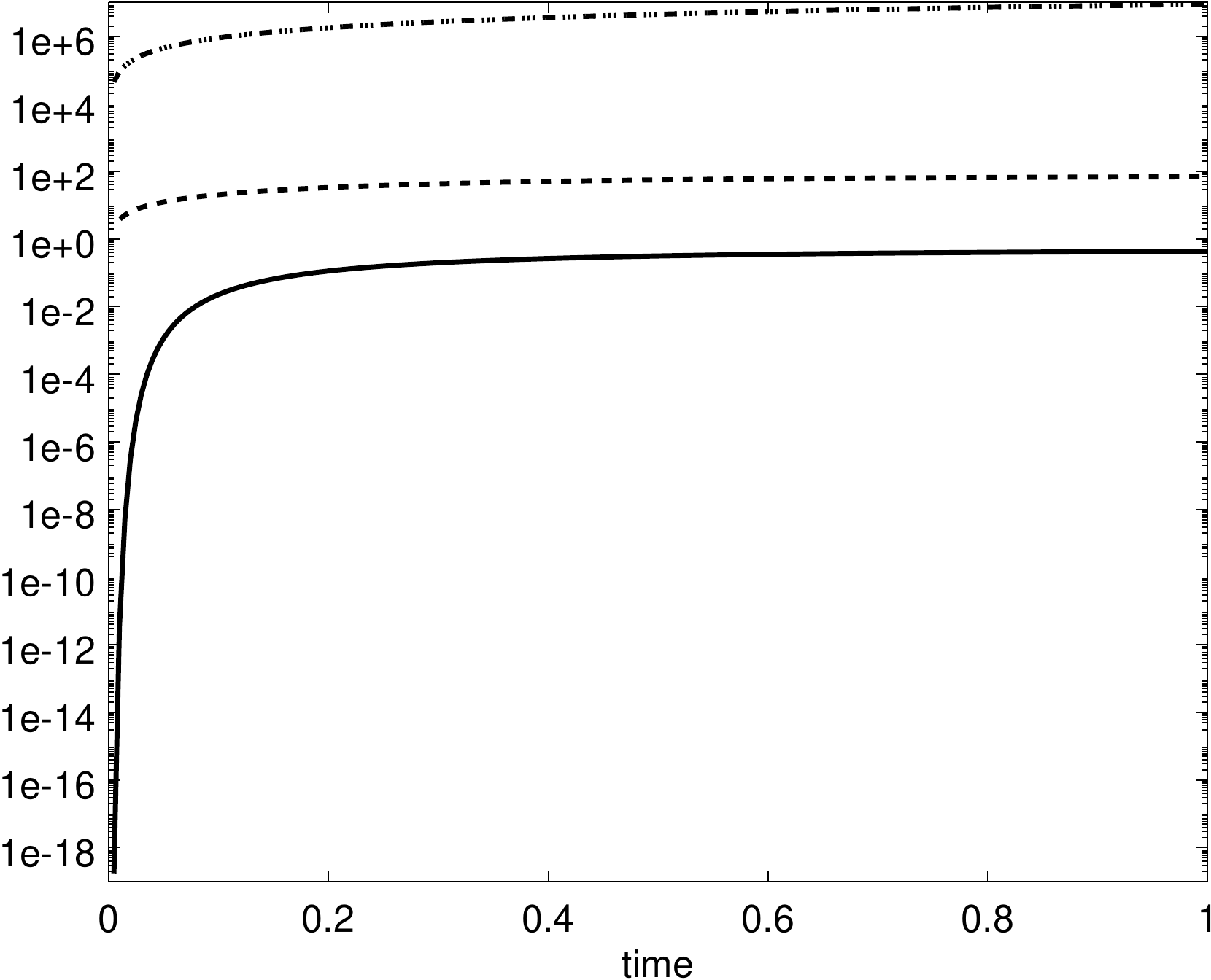}
\includegraphics[width=0.48\textwidth]{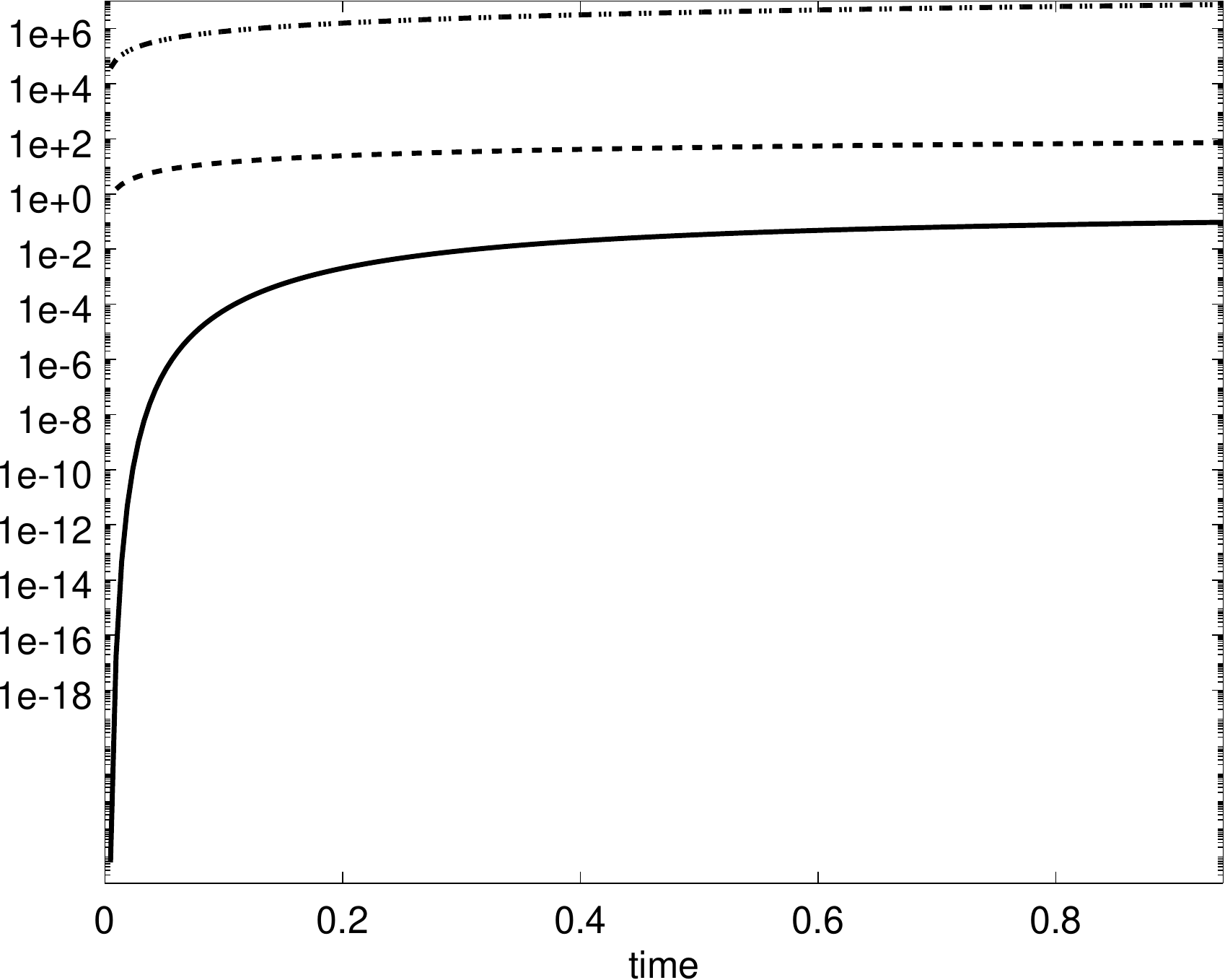}
\caption{Residual norms  $\|r_k(s)\|$ versus $s\in[0,t]$
(left and right, solid lines)
as shown in Figure~\ref{f:rk(s)}, together with their
upper bounds given by Lemma~\ref{l1} and estimates~\eqref{no_shrp}.
Dashed lines: $h_{k+1,k}\|u(s)-u(0)\|$.
Dashed--dotted and dotted lines (coinciding):
$s\beta h_{k+1,k} \|H_k\|\,\|\varphi_1(-sH_k)\|$ and
$s\beta h_{k+1,k} \|H_k\|   |\varphi_1(-s
\omega_k)|$.
}
\label{f:est}
\end{figure}

\subsection{Making the restarting procedure adaptive} 
As can be seen in numerical experiments (see right
plots in Figure~\ref{f:conv_cd}),
the value $\delta_{k_{\max}}$ tends
to remain approximately constant after several first
restarts.
At each restart the time interval
is decreased from $[0,t]$ to $[0,t-\delta_{k_{\max}}]$.
Hence, at the end of each restart we can 
estimate the number of restarts that have yet to be done as
$\approx t/\delta_{k_{\max}}$.

This observation allows to make our time restarting procedure adaptive
as follows.
Denote by $[\cdot]$ the rounding operation to the nearest integer.
While carrying out Krylov steps $k=1,\dots,k_{\max}$, 
we compute for several values $k$ (in our experiments these are 
$[k_{\max}/3]$, $[2k_{\max}/3]$, $[5k_{\max}/6]$ and $k_{\max}$)
the values $\delta_k$ and measured CPU times $t_k^{\mathsf{cpu}}$ spent to
carry out these $k$ steps.
Then the values
$$
\dfrac{t}{\delta_{k}}t_k^{\mathsf{cpu}}, \quad
k=[k_{\max}/3],\; [2k_{\max}/3], \; [5k_{\max}/6],\; k_{\max}
$$
are estimates of the remaining CPU needed to finish the
computations for the restart lengths  
$[k_{\max}/3]$, $[2k_{\max}/3]$, $[5k_{\max}/6]$ and $k_{\max}$, respectively.
Having computed these values during the restart,
we can \replaced{adjust}{ change} the restart length for the next restart to have the smallest expected CPU time.
In the experiments shown below we do so only if the expected gain in CPU time
exceeds 5\%.
This adaptation procedure is then carried out at the end of every restart
and also includes an option for an adaptive increase of the restart length
as follows.
If the current restart length is $\tilde{k}$, with $\tilde{k}<k_{\max}$, and
the CPU time estimations indicate that the restart length $\tilde{k}$ is the
optimal one then the new restart length is set to $\min\{\tilde{k}+5,k_{\max}\}$.
We call this adaptive restarting ART: adaptive residual--time restarting.
In Section~\ref{sect:num} we test the ART restarting numerically and discuss it further.

\subsection{RT restarting for the SAI Krylov subspace method}
\label{sect:sai}
Shift-and-invert (SAI) Krylov subspace methods for computing actions
of the matrix exponential are rational Krylov subspace methods
designed to have a much faster convergence (in terms of the
Krylov subspace dimension) than in regular polynomial
Krylov subspace methods~\cite{MoretNovati04,EshofHochbruck06}.
In SAI methods the Krylov subspace is built up for the matrix
$(I+\gamma A)^{-1}$, $\gamma>0$, rather than for $A$, and thus the price to pay
for the faster convergence is solution, at each Krylov step,
of linear systems with the matrix $I+\gamma A$.
If these linear systems are solved by an iterative method,
there are efficient strategies to save computational work by relaxing the
tolerance to which the systems are solved~\cite{EshofHochbruck06}.
The shift $\gamma$ can be chosen depending on the required tolerance
$\tol$, and for the tolerances of order $10^{-5}$ to $10^{-7}$ a good
value of $\gamma$ is $t/10$ where $t$ is the time interval
length~\cite{EshofHochbruck06}.
This is the value we use in all our experiments.
An attractive property of the SAI Krylov methods is their
often observed space--mesh independent convergence, which
can be proven for the discretizations of parabolic PDEs with a
numerical range close to the positive real
axis~\cite{EshofHochbruck06,GoecklerGrimm2014}.

We now describe how the RT restarting strategy described above can
be applied within the SAI Krylov subspace methods.
The regular Arnoldi decomposition~\eqref{AV=VH},\eqref{Arn}
still holds for the SAI methods, namely
\begin{equation}
\label{Arn_sai}
(I+\gamma A)^{-1}V_k = V_{k+1}\Hht_k  
= V_k\Ht_k + \widetilde{h}_{k+1,k}v_{k+1}e_k^T,
\end{equation}
where $\Ht_k\in\Rr^{k\times k}$ denotes the first $k$ rows of $\Hht_k$.
The approximation $y_k(t)\approx\exp(-tA)v$ is computed according
to~\eqref{yk}, with $H_k$ being the SAI back transformed
projection:
\begin{equation}
\label{saiH}
H_k = \frac1\gamma(\Ht_k^{-1}-I).
\end{equation}
After rewriting the SAI Arnoldi decomposition~\eqref{Arn_sai}
as~\cite[formula~(4.1)]{EshofHochbruck06}
\begin{equation}
\label{Arn_sai2}
AV_k = V_kH_k - \frac{\widetilde{h}_{k+1,k}}{\gamma}(I+\gamma A)v_{k+1}e_k^T\Ht_k^{-1},
\end{equation}
we can see that the residual $r_k(t)=-y'_k(t)-Ay_k(t)$
in the SAI Krylov subspace method reads~\cite{BGH13}
\begin{equation}
\label{rk_sai}
r_k(t)    = \beta_{k} (t) (I+\gamma A)v_{k+1},\qquad
\beta_k(t) = 
\beta\,\frac{\widetilde{h}_{k+1,k}}{\gamma}\, e_k^T\Ht_k^{-1}\exp(-tH_k)e_1.  
\end{equation}
This residual function $r_k(s)$ has quite a different dependence on
$s\in[0,t]$ than the residual in the regular Krylov method~\eqref{rk}:
its convergence $\|r_k(s)\|\rightarrow 0$ with growing $k$ is much
more uniform in $s\in [0,t]$.  This means that we can not expect
that $r_k(s)$ is much smaller in norm for small $s$ than for larger $s$.
Typical dependence of the SAI residual norm $\|r_k(s)\|$ on $s\in [0,t]$
is shown in Figure~\ref{f:rk_sai}.
\begin{figure}
\centering{%
\includegraphics[width=0.48\textwidth]{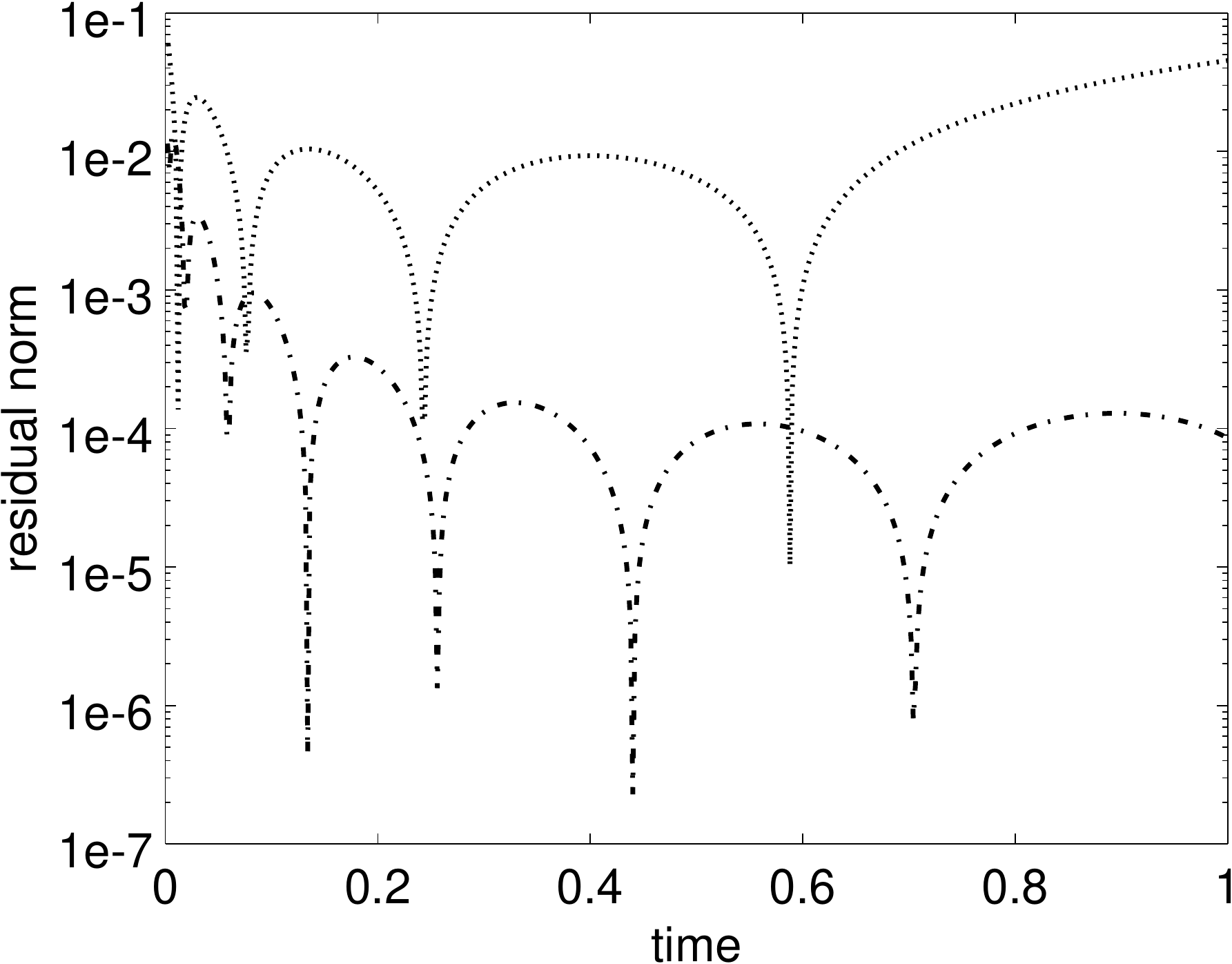}}
\caption{Residual norms  $\|r_k(s)\|$ versus $s\in[0,t]$
of the SAI Krylov subspace method after $k=5$ (dotted line)
and $k=10$ (dash-dotted line) Krylov steps for the convection--diffusion
test problem (see Section~\ref{sect:cd}).
The matrix $A$ is a discretized convection--diffusion
operator for Peclet number $\Pe=100$, mesh $102\times 102$
($n=100^2$).}
\label{f:rk_sai}  
\end{figure}
As we see, there are a few distinct points $s$ on the interval $[0,t]$
where $\|r_k(s)\|$ has values much smaller than the average value
$\frac1t \int_0^t\|r_k(s)\|\, ds$.  Therefore, we can carry out the RT restarting
procedure by assigning $\delta_k$ the value $s$ at which $\|r_k(s)\|$
has the smallest value:
$$
\delta_k := \arg\min_{s\in[0,t]} \|r_k(s)\|.
$$
The rest of the  RT restarting is carried out exactly in the same way
as for regular Krylov subspace methods, i.e., we set
$v:=y_{k_{\max}}(\delta_{k_{\max}})$ and proceed further by solving the IVP~\eqref{ivp}
on a shorter time interval $[0,t-\delta_{k_{\max}}]$.
The drawback of this approach is that the reachable accuracy is restricted
by the value $\|r_k(\delta_k)\|=\min_{s\in[0,t]} \|r_k(s)\|$
(this is the error introduced to the initial data of the restarted IVP
by setting $v:=y_{k_{\max}}(\delta_{k_{\max}})$).
For instance, as can be seen in Figure~\ref{f:rk_sai},
for the restart length $k_{\max}=5$ we have $\delta_k\approx 0.6$ and
the reachable accuracy is $\min_{s\in[0,t]} \|r_k(s)\|\approx 10^{-5}$.
In practical implementations of the method a warning should be given
if such an accuracy drop takes place during the restart (i.e., if
it turns out that $\min_{s\in[0,t]} \|r_k(s)\|$ is larger than
the prescribed tolerance $\tol$).
We further discuss the RT restarted SAI Krylov subspace methods in
Section~\ref{sect:num}.

\section{Numerical experiments}
\label{sect:num}
All the numerical tests are carried out on a Linux PC
with 8~CPUs Intel Xeon~E5504 2.00GHz in Matlab.  No parallelization
is carried out for the regular Arnoldi method.
However, for Arnoldi/SAI methods automatic parallelization of the sparse
LU~factorizations is employed by Matlab. 

\subsection{Convection--diffusion test problem}
\label{sect:cd}
In this test problem the matrix $A$ is a standard five point
central-difference discretization
of the following convection--diffusion operator acting on
$u(x,y)$, defined for $(x,y)\in [0,1]\times [0,1]$ 
and satisfying the homogeneous Dirichlet boundary conditions:
\begin{gather*}
  L[u]=-(D_1u_x)_x-(D_2u_y)_y + \Pe{}\,\left(
  \frac12(v_1u_x + v_2u_y) + \frac12((v_1u)_x + (v_2u)_y) \right)
\\
D_1(x,y)=\begin{cases}
    10^3 \; &(x,y)\in [0.25,0.75]^2, \\
    1       &\text{otherwise},
    \end{cases}\qquad\quad D_2(x,y)=\frac12 D_1(x,y),
\\
v_1(x,y) = x+y,\qquad v_2(x,y)=x-y.
\end{gather*}
where the convective terms (i.e., the terms containing
the first derivatives of $u$) are written in this specific
form to guarantee that the contribution of the convection terms
results in a skew-symmetric matrix~\cite{Krukier79}.
In the experiments, we use matrices $A$ discretized on
mesh $802\times 802$ for the Peclet number $\Pe=200$
and on 
mesh $1202\times 1202$ for the Peclet number $\Pe=300$.
\added{The problem size for these meshes is then
$n=640\,000$ and $n=1\,440\,000$, respectively.}
In both cases $\|\frac12(A+A^T)\|_2\approx 6000$ and
$\|\frac12(A-A^T)\|_2\approx 0.5$, so that the matrices are
weakly non-symmetric.
The initial vector $v$ is set to the values
of the function $\sin(\pi x)\sin(\pi y)$ on the 
finite-difference mesh and then normalized as $v:=v/\|v\|$.
The final time is set to $t=1$ and the tolerance to $\tol=10^{-6}$.

We compare the Krylov subspace method based on the Arnoldi process and our RT restarting
with the following three methods:
\begin{enumerate}
\item
  The \texttt{phiv} function of the EXPOKIT package~\cite{EXPOKIT} based on the
  EXPOKIT time stepping restarting strategy.
\item
  Krylov subspace method based on the Arnoldi process and residual
  restarting \cite[Chapter~3]{PhD_Niehoff}.
\item
  Krylov subspace method based on the Arnoldi process and NH (Niehoff--Hochbruck)
  restarting \cite[Chapter~3]{PhD_Niehoff}.  
\end{enumerate}
We note that our implementations of the Arnoldi method do not include
reorthogonalization of the Krylov basis vectors and we have not
noticed a serious orthogonality loss.
In the tables of this section presenting numerical results,
the last two restarting methods are indicated by ``Res'' and ``NH'',
respectively.
In these tables, the restarting methods presented in this paper are
denoted by ``RT'' (residual--time) and ``ART'' (adaptive RT)
restarting.

\begin{table}
\caption{Results for the convection--diffusion test problem.
  The restart ways are denoted by ``Res'' (residual based),
  ``NH'' (Niehoff--Hochbruck), ``RT'' (residual--time) and
  ``ART'' (adaptive RT).  EXPOKIT uses time--stepping
restarts.}
\label{t:cd}
\centering{\begin{tabular}{llccc}
\hline\hline
method  & restart      &  CPU    & steps& error  \\
        & length, way   &  time, s&     &        \\
\hline
%
\multicolumn{5}{c}{$\Pe=200$, mesh $802\times 802$} \\
%
EXPOKIT & 30           & 57.3   & 800  & \texttt{3.82e-08} \\
Arnoldi & 30, Res      & 67.8   & 316  & \texttt{1.18e-07} \\ 
Arnoldi & 30, NH       & 71.7   & 317  & \texttt{1.02e-08} \\ 
Arnoldi & 30, RT       & 44.6   & 569  & \texttt{2.28e-08} \\ 
Arnoldi & 30, ART      & 41.1   & 572  & \texttt{2.05e-08} \\ 
EXPOKIT & 40           & 63.6   & 756  & \texttt{5.22e-09} \\ 
Arnoldi & 40, Res      & 74.9   & 298  & \texttt{4.40e-08} \\ 
Arnoldi & 40, NH       & 78.1   & 299  & \texttt{9.80e-09} \\ 
Arnoldi & 40, RT       & 45.1   & 505  & \texttt{1.18e-08} \\ 
Arnoldi & 40, ART      & 42.9   & 499  & \texttt{1.27e-08} \\ 
\hline
\multicolumn{5}{c}{$\Pe=300$, mesh $1202\times 1202$} \\
EXPOKIT & 30           & 129.2  & 800  & \texttt{3.25e-08} \\
Arnoldi & 30, Res      & 136.9  & 310  & \texttt{8.34e-08} \\ 
Arnoldi & 30, NH       & 145.0  & 312  & \texttt{9.99e-09} \\ 
Arnoldi & 30, RT       & 90.7   & 539  & \texttt{2.83e-08} \\ 
Arnoldi & 30, ART      & 85.8   & 538  & \texttt{2.55e-08} \\ 
EXPOKIT & 40           & 147.3  & 756  & \texttt{4.09e-09} \\ 
Arnoldi & 40, Res      & 161.1  & 292  & \texttt{2.51e-08} \\ 
Arnoldi & 40, NH       & 154.2  & 293  & \texttt{1.08e-08} \\ 
Arnoldi & 40, RT       & 98.3   & 489  & \texttt{1.26e-08} \\ 
Arnoldi & 40, ART      & 93.7   & 492  & \texttt{1.01e-08} \\ 
\hline
\end{tabular}}
\end{table}

A simple way to restart Krylov subspace evaluations of the matrix exponential
is to split the time interval into smaller intervals (time steps), on which
the method converges within an acceptable number of Krylov steps.
A question arises how this time-stepping restarting approach is compared to
our RT restarting.
To answer this question we include into comparisons the EXPOKIT package,
which exploits such a time-stepping restarting~\cite{EXPOKIT}.

The results of the comparisons are presented in Tables~\ref{t:cd}
and~\ref{t:cd-sai}.
\added{The values reported there in the column ``error''
are relative error norms computed with respect to the
EXPOKIT solution.}
The results in Table~\ref{t:cd} show that our RT and ART restarting
strategies outperform
the other restarting strategies in terms of the CPU time.
The residual based and NH restarting perform worse than the EXPOKIT
restarting in terms of the CPU time.  The reason for this is
probably the sophisticated treatment of the projected problems
in these restarting strategies, which creates an overhead in computational
costs.
Indeed, if, for instance, 5 restarts of length $k_{\max}=30$
are carried out with the NH restarting then the matrix exponential
of a matrix size $150\times 150$ has to be computed.
Additional costs in the residual based restarting is solution
of a small projected ODE system, which, for this restarting, can not be
solved by a matrix exponential evaluation~\cite{BGH13}.
Note also that the matrix-vector products are relatively cheap
for this two-dimensional problems, which makes the other costs
more pronounced.

In Table~\ref{t:cd-sai} the results for Arnoldi/SAI method are
presented.
In this case the method itself converges much quicker than
the regular Arnoldi method, hence, fewer matrix--vector
products are required and the restarting effects are less
visible.  However, we see that all the restarting strategies
perform well in this case.  The drawback of our RT restarting
is that, as discussed in Section~\ref{sect:sai}, it tends to deliver
a less accurate solution  for shorter restart lengths.
We plan to address this problem in future.

\begin{table}
  \caption{Results for the convection--diffusion test problem
    for the Arnoldi/SAI method.
The restart ways are denoted by ``Res'' (residual based),
``NH'' (Niehoff--Hochbruck), ``RT'' (residual--time) and
``ART'' (adaptive RT).  EXPOKIT uses time--stepping
restarts.}
\label{t:cd-sai}
\centering{\begin{tabular}{llccc}
\hline\hline
method  & restart      &  CPU   & steps& error  \\
        & length, way   &  time, s&     &        \\
\hline
\multicolumn{5}{c}{$\Pe=200$, mesh $802\times 802$} \\
Arnoldi/SAI & 30, residual & 46.8   & 16   & \texttt{1.35e-09} \\ 
Arnoldi/SAI & 30, NH       & 46.2   & 14   & \texttt{8.52e-09} \\ 
Arnoldi/SAI & 30, RT       & 44.7   & 14   & \texttt{8.52e-09} \\ 
Arnoldi/SAI & 10, residual & 48.1   & 18   & \texttt{3.90e-07} \\ 
Arnoldi/SAI & 10, NH       & 45.4   & 14   & \texttt{1.24e-08} \\ 
Arnoldi/SAI & 10, RT       & 46.2   & 20   & \texttt{2.50e-07} \\ 
Arnoldi/SAI &  5, residual & 54.5   & 20   & \texttt{1.08e-09} \\ 
Arnoldi/SAI &  5, NH       & 45.4   & 14   & \texttt{1.43e-08} \\ 
Arnoldi/SAI &  5, RT       & 43.8   & 13   & \texttt{4.25e-05} \\ 
\hline
\multicolumn{5}{c}{$\Pe=300$, mesh $1202\times 1202$} \\
Arnoldi/SAI & 10, residual & 159.8  & 17   & \texttt{2.58e-07} \\ 
Arnoldi/SAI & 10, NH       & 155.5  & 14   & \texttt{8.16e-09} \\ 
Arnoldi/SAI & 10, RT       & 154.3  & 14   & \texttt{8.03e-08} \\ 
Arnoldi/SAI &  5, residual & 159.1  & 15   & \texttt{1.75e-07} \\ 
Arnoldi/SAI &  5, NH       & 155.5  & 14   & \texttt{8.31e-09} \\ 
Arnoldi/SAI &  5, RT       & 153.6  & 15   & \texttt{8.14e-06} \\ 
\hline
\multicolumn{5}{c}{$\Pe=300$, mesh $1202\times 1202$, $t=10$}\\
Arnoldi/SAI & 30, Res      & 110.3  & 17   & \texttt{2.94e-09} \\ 
Arnoldi/SAI & 30, NH       & 108.0  & 15   & \texttt{1.25e-08} \\ 
Arnoldi/SAI & 30, RT       & 106.0  & 15   & \texttt{1.25e-08} \\ 
Arnoldi/SAI & 10, Res      & 109.5  & 18   & \texttt{1.42e-06} \\ 
Arnoldi/SAI & 10, NH       & 106.5  & 15   & \texttt{1.57e-08} \\ 
Arnoldi/SAI & 10, RT       & 104.0  & 15   & \texttt{1.57e-05} \\ 
\hline
\end{tabular}}
\end{table}

\subsection{Photonic crystal test problem}
This test problem is a space-discretized system of the three-dimensional
Maxwell equations in a lossless and source-free medium:
\begin{equation}
\label{mxw}
\begin{aligned}
  \frac{\partial\bm{H}}{\partial t} &= -\frac{1}{\mu}\nabla\times\bm{E},
  \\
\frac{\partial\bm{E}}{\partial t} &= \frac{1}{\varepsilon}\nabla\times\bm{H},
\end{aligned}
\end{equation}
where
$\varepsilon$ and $\mu$ are scalar functions of $(x,y,z)$
representing permittivity and permeability, respectively,
and $\bm{H}$ and $\bm{E}$ are vector-valued functions of $(x,y,z,t)$
representing unknown magnetic and electric fields, respectively.
The boundary conditions are homogeneous Dirichlet.
The spatial setup for this test problem is taken from the second
test in~\cite{Kole01}:
in a spatial domain 
$[-6.05, 6.05]\times [-6.05, 6.05]\times [-6.05, 6.05]$
filled with air (relative permittivity $\varepsilon_r=1$) 
there is a dielectric specimen occupying the region
$[-4.55, 4.55]\times [-4.55, 4.55]\times [-4.55, 4.55]$.
The specimen has 27~spherical voids ($\varepsilon_r=1$)
of radius~$1.4$, whose centers have coordinates
$(x_i,y_j,z_k)=(3.03 i, 3.03j, 3.03 k)$,
$i,j,k= -1,0,1$.
The relative permittivity 
in the dielectric specimen is
$\varepsilon_r=5.0$.
The initial values are zero for all the components of both
fields $\bm{H}$ and $\bm{E}$ except for the $x$- and $y$-components
of $\bm{E}$: they are set to nonzero values in the middle of the
spatial domain to represent a light emission.
\added{In addition, the initial value vector $v$ is normalized
  as $v:=v/\|v\|$.}
The standard finite-difference staggered Yee discretization in space
leads to an ODE system of the form~\eqref{ivp}. 
The meshes $40\times 40\times 40$ and $80\times 80\times 80$
used in this test lead to problem size
$n=413\,526$
and
$n=3\,188\,646$, respectively.
The final time is now set to $t=10$ and the tolerance to $\tol=10^{-6}$.

In the second test problem we compare the two solvers which
come out as the best for the first test problem, namely
the \texttt{phiv} solver of EXPOKIT~\cite{EXPOKIT}
and our RT restarting (along with its adaptive version ART).
Our experience reveals that the SAI strategy is inefficient
in this test problem, which can be expected due to 
a strong nonsymmetry of $A$.  This is in contrast
to discretized Maxwell equations with nonreflecting boundary
conditions or in lossy media, where SAI can be
efficient~\cite{VerwerBotchev09,Botchev2016}.

The results are presented in Table~\ref{t:mxw},
where the values reported there in the column ``error''
are relative error norms computed with respect to the
EXPOKIT solution.
As we see, the RT restarting performs well comparably
to EXPOKIT and outperforms it for longer restarts.
We also see that the ART restarting, as expected, indeed helps to
reduce the CPU time, possibly at the cost of the increased total number of Krylov
steps.

%
%
%
%
%
\begin{table}
  \caption{Results for the photonic crystal test problem.
The restart ways are denoted by ``RT'' (residual--time) and
``ART'' (adaptive RT).  EXPOKIT uses time--stepping
restarts.}
\label{t:mxw}
\centering{\begin{tabular}{llccc}
\hline\hline
method  & restart      &  CPU    & steps& error  \\
        & length, way  &  time, s&     &        \\
\hline
\multicolumn{5}{c}{mesh $40\times 40\times 40$} \\
EXPOKIT & 30           & 10.6    & 256  & \texttt{1.02e-06} \\
Arnoldi & 30,  RT      & 11.3    & 231  & \texttt{5.19e-07} \\
Arnoldi & 30, ART      & 11.2    & 229  & \texttt{5.80e-07} \\
EXPOKIT & 70           & 23.3    & 288  & \texttt{2.16e-09} \\
Arnoldi & 70,  RT      & 14.4    & 168  & \texttt{1.17e-07} \\
Arnoldi & 70, ART      & 14.4    & 167  & \texttt{1.57e-07} \\
\hline
\multicolumn{5}{c}{mesh $80\times 80\times 80$} \\
EXPOKIT & 30           & 155.2   & 512  & \texttt{4.37e-07} \\ 
Arnoldi & 30, RT       & 162.0   & 502  & \texttt{3.15e-07} \\ 
Arnoldi & 30, ART      & 160.8   & 488  & \texttt{4.06e-07} \\ 
EXPOKIT & 40           & 170.3   & 420  & \texttt{2.25e-06} \\ 
Arnoldi & 40, RT       & 160.8   & 427  & \texttt{2.77e-07} \\ 
Arnoldi & 40, ART      & 168.9   & 417  & \texttt{9.69e-07} \\ 
EXPOKIT & 50           & 178.9   & 416  & \texttt{2.33e-07} \\ 
Arnoldi & 50, RT       & 169.1   & 383  & \texttt{1.52e-07} \\ 
Arnoldi & 50, ART      & 169.1   & 379  & \texttt{2.31e-07} \\ 
EXPOKIT & 60           & 208.1   & 434  & \texttt{2.43e-08} \\
Arnoldi & 60, RT       & 181.3   & 354  & \texttt{1.06e-07} \\ 
Arnoldi & 60, ART      & 169.0   & 378  & \texttt{1.49e-07} \\ 
EXPOKIT & 70           & 250.5   & 432  & \texttt{2.74e-08} \\ 
Arnoldi & 70, RT       & 190.1   & 338  & \texttt{8.16e-08} \\ 
Arnoldi & 70, ART      & 162.2   & 389  & \texttt{1.06e-07} \\ 
\hline
\end{tabular}}
\end{table}

\section{Conclusions and outlook}
\label{sect:fin}
In this paper a new restarting RT (residual--time) procedure for Krylov subspace matrix exponential
evaluations is proposed, analyzed and tested numerically.  Our restarting is
algorithmically simple as it only relies on evaluation of the readily
available residual~\eqref{rk} and the restarted problem has the same form~\eqref{ivp}
as the original one.
Furthermore, the RT restarting compares favorably to three other restarting techniques,
namely, the time step restarting of EXPOKIT~\cite{EXPOKIT}, the generalized residual restarting
of Niehoff--Hochbruck~\cite{PhD_Niehoff}
and the residual-based restarting~\cite{CelledoniMoret97,BGH13}.
For the rational SAI (shift-and-invert) Krylov subspace approximations
the proposed restarting works well for moderate accuracy requirements.
The RT restarting is also implemented adaptively and the adaptive RT (ART)
Krylov subspace method is available as  a part of the
Octave/Matlab package expmARPACK at \url{http://team.kiam.ru/botchev/expm/}.

Our future research plans include extension of this restarting approach
to nonhomogeneous and nonlinear ODE systems, in combination with
the exponential block Krylov method~\cite{Botchev2013}.
Furthermore, it would be interesting to see how this approach
will work for second order ODE systems, where the matrix cosine and
sine functions are involved.

The authors thank Vladimir Druskin for stimulating discussions.

\bibliography{matfun,my_bib,mxw}
\bibliographystyle{abbrv}

\end{document}